\newcommand{\polya}{P\'{o}lya}
\newcommand\vecX{{\bf X}}
\newcommand\matA{{\bf A}}
\newcommand\matB{{\bf B}}
\newcommand{\Exp}{{\rm Exp}}
\newcommand{\E}{\mathbb{E}}
\newcommand{\convD}{\, \overset{D}{\longrightarrow} \,}
\newcommand{\V}{\mathbb{V}{\rm ar}}
\newcommand{\field}{\mathcal{F}}
\newcommand{\Given}{\, \Bigg{\vert} \,}
\newtheorem{theorem}{Theorem}
\newtheorem{cor}{Corollary}
\newtheorem{lem}{Lemma}
\newtheorem{remark}{Remark}
\begin{document}  
\begin{center}
{\Large \bf Distributions in the constant-differentials P\'{o}lya process}

\bigskip
{\large \bf Hosam M.\ Mahmoud\footnote{Department of Statistics, The George Washington University, Washington, D.C. 20052, U.S.A.} \qquad and \qquad
Panpan Zhang\footnote{Department of Statistics, University of Connecticut, Storrs, CT 06269, U.S.A.}}

\bigskip

\today
\end{center}

\bigskip\noindent
{\bf Abstract:}
We study a class of unbalanced constant-differentials \polya\ processes on white and blue balls. We show that the number of white balls, 
the number of blue balls, and the total number of balls,
when appropriately scaled, all converge in distribution to 
gamma random variables with parameters depending on the differential index and the amount of ball addition at the epochs, but
not on the initial conditions. 
The result is obtained by an analytic approach utilizing partial differential equations.

\bigskip\noindent
{\bf Keywords:} \polya\ urn ; \polya\ process
; partial differential equation ; method of characteristic curves
; transport equation

	\section{Introduction}
{\em \polya\ urn} has been a popular model in the research of applied probability and stochastic analysis due to its simplicity and versatility. In general, the dynamics of \polya\ urns are considered on a discrete-time scale. Two-color \polya\ urn schemes have found applications in many fields. Two of the most classical \polya\ urn models are the {\em \polya-Eggenberger urn} (see~\cite{Polya}) and the {\em Ehrenfest urn} (see~\cite{Ehrenfest}), which were respectively used to model disease contagion and gas diffusion. In modern times, different types of \polya\ urns are utilized to study scientific models from more diverse areas, such as the {\em play-the-winner scheme} in clinical trials (e.g.,~\cite{Rosenberger, Wei}), and recursive tree models and searching algorithms in computer science (e.g.,~\cite{Mahmoud2003}). In addition, mathematicians, probabilists and theorists are committed to developing rigorous methods for characterizing the dynamics of \polya\ urns analytically. Some exactly solvable \polya\ urn models are summarized and listed in~\cite{Flajolet}, and the extensions to solvable randomized \polya\ urns appear in~\cite{Morcrette}. We refer the interested readers to~\cite{Bala} for a general background, and to~\cite{Johnson, Mahmoud} for textbook style expositions.

Embedding discrete-time processes into continuous time has been around for a long time; see~\cite{Kac}, for example. The idea was first employed in the context of urns in~\cite{Athreya}. A Poisson-like transformation was introduced to create a bridge between discrete-time urn schemes and the associated continuous-time urn processes obtained by embedding. 
In some articles or texts, this type of transformation is called ``poissonization.'' In the literature, sometimes the poissonized \polya\ urns are called {\em \polya\ processes}, first 
so named in~\cite{Sparks}. An inverse transformation (called ``depoissonization'') 
was also reported in~\cite{Athreya}. Problems arising in depoissonization are highly nontrivial, and the 
inverse transformation itself seems elusive till today.

\section{The \polya\ process}

To begin with, we give a few words about \polya\ urns. A two-color \polya\ urn scheme is an urn 
containing balls of up to two  
colors, say white and blue, and evolving over time according to some predetermined rules, which govern the dynamics of urn evolution. At each time point, a ball is drawn at random (all balls in the urn being equally likely),
its color is observed, and it is placed back in the urn along with some new balls according to the pertinent addition rule. This is a sampling process with replacement and possible growth (or shrinkage) of the urn population. The addition rules are as follows. When a white ball is drawn, it is returned to the urn together with $a$ white balls and $b$ blue balls; when a blue ball is drawn, it is returned to the urn together with $c$ white balls and $d$ blue balls. In general, these numbers can be positive, zero or negative, where a negative value is interpreted as removing that many balls out of the urn, or even random. 
The class of urn processes we investigate in this manuscript calls only for fixed replacements.

An urn is called {\em tenable} if the rule can be executed forever on all possible stochastic paths, and it never gets ``stuck,'' due to being empty (in which case the next ball cannot be drawn) or not having enough balls to be removed for at least one color, while executing the ball replacement rules after a draw.

It is customary to represent these dynamics by the (ball addition) {\em replacement matrix}
\begin{equation*}
\begin{pmatrix}
a & b
\\ c & d
\end{pmatrix};
\end{equation*}
the rows of this matrix are indexed with white and blue, respectively from top to bottom, and the columns are indexed with white and blue, respectively from left to right.

An associated \polya\ urn process is obtained by embedding a discrete-time \polya\ urn scheme into continuous time. It can be thought of as a renewal process, where the predetermined rule is executed at each renewal point (called {\em epoch}). It was suggested in~\cite{Athreya} to use exponential random variables with mean $1$ (which we call $\Exp{(1)}$) for modeling {\em interarrival time}, as the exponential distribution has tractable properties. In particular, the exponential distribution appears to be a useful device for the embedding owing to its appealing {\em memoryless} property and scalability. 

More precisely, consider that each ball in the 
urn is endowed with a clock that rings in $\Exp{(1)}$ time, and all these clocks are independent of each other and of any other random variables related to the past. At an epoch, the pertinent rules associated with its replacement matrix are executed. All newly-added balls are endowed with their own independent clocks. The execution of the rules takes place instantaneously after a draw, without any time lapse.  

Most investigations of urn schemes or processes with a constant ball replacement matrix assume a condition called {\em balance}, in view of which the row sum of the ball replacement matrix is constant, and thus the total number of balls added at each step remains unchanged; see~\cite{Chauvin2011, Chauvin2015} for instance. The balance condition leads to a mathematically-convenient model, whereby conditional proportions have a {\em deterministic} denominator. 

In this short communication, we deviate from the balance condition, a situation in which the total number of balls is a nondegenerate {\em  random variable}. We are interested in a class of two-color \polya\ urns with the replacement matrix
\begin{equation}\matA = 
\begin{pmatrix}
-a & -a
\\ a & a
\end{pmatrix},
\label{Eq:constant}
\end{equation}
for integer $a\ge 1$. We characterize the asymptotic behavior of the possonized urn. In this particular class of urn models, we always add or subtract balls of the two colors in the same amount. The difference between the number of blue and number of white balls remains the same at all times. We call $\Delta$ the {\em differential index} of the urn, as it appears as
a governing parameter in the underlying asymptotic distributions. 

Note that we require $\Delta \ge 1$ for tenability; if $\Delta \le 0$, the initial number of white balls exceeds the number of blue balls, and it is possible for the process to get locked in a path that depletes the blue balls and ultimately the process halts. Besides, this class of \polya\ processes does not belong to generalized urn processes investigated in the seminal article~\cite{Janson}, as several assumptions therein are not satisfied, rendering our research novel. The study of this class in discrete time presents many algebraic difficulties owing to the lack of balance condition. It was pointed out by Basil Morcrette during his presentation in the 24th International Meeting on the Analysis of Algorithms (AofA 2013) that the problems of unbalanced \polya\ urn are challenging even by using the very powerful analytic combinatoric methods, since the ordinary differential equations generated by the so-called ``analytic urn'' technique are significantly harder to solve than those that appear upon applying the method to a balanced urn. 

\section{Continuous-time constant-differentials \polya\ processes}
\label{Sec:continuous}
In this section, we investigate the constant-differentials \polya\ processes that are generated by the \polya\ urns associated with the replacement matrix in~(\ref{Eq:constant}) 
and a positive differential index. 
Our strategy is to exploit the methodology developed in~\cite{balajiM}, and our goal is to characterize the asymptotic distribution of $W(t)$ and $B(t)$, the number of white balls and blue balls, respectively, in the urn at time $t \in \mathbb{R}_0^{+}$, after proper scaling. The principal idea is to establish a partial differential equation (PDE) that governs the dynamics of the \polya\ process as time goes by. 

Let $\phi(t, u, v):= \E\left[e^{uW(t) + vB(t)}\right]$ be the joint moment generating function of $W(t)$ and $B(t)$. The PDE in~\cite{balajiM} is valid
for all kinds of tenable urn processes 
(not necessarily balanced).
\begin{lem}[\mbox{\cite[Lemma 2.1]{Balaji}}]
	\begin{equation}
	\frac{\partial \phi(t, u, v)}{\partial t} + \left(1 - e^{au + bv}\right) \frac{\partial \phi(t, u, v)}{\partial u} + \left(1 - e^{cu + dv}\right) \frac{\partial \phi(t, u, v)}{\partial v} = 0.
	\label{Eq:generalPDE}
	\end{equation}
\end{lem}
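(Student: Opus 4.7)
The plan is to derive the PDE~(\ref{Eq:generalPDE}) by standard infinitesimal analysis of the continuous-time Markov chain $(W(t), B(t))$ underlying the \polya\ process. The starting observation is that, by the memoryless property of the $\Exp(1)$ clocks, conditional on $W(t)=w$ and $B(t)=\beta$ the probability that exactly one white clock rings during $[t,t+h]$ is $wh + o(h)$, that exactly one blue clock rings is $\beta h + o(h)$, and that no clock rings is $1-(w+\beta)h+o(h)$; the probability of two or more simultaneous rings is $o(h)$.

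Next, I would translate these transitions into the conditional MGF. A white ring replaces $(w,\beta)$ by $(w+a,\beta+b)$, multiplying the integrand $e^{uw+v\beta}$ by $e^{au+bv}$, whereas a blue ring contributes a factor $e^{cu+dv}$. Collecting the three cases yields
\begin{equation*}
\E\bigl[e^{uW(t+h)+vB(t+h)} \given W(t)=w,\,B(t)=\beta\bigr] = e^{uw+v\beta}\Bigl[1 + w\bigl(e^{au+bv}-1\bigr)h + \beta\bigl(e^{cu+dv}-1\bigr)h\Bigr] + o(h).
\end{equation*}
Taking expectation over $(W(t),B(t))$ and recognizing $\E[W(t)e^{uW(t)+vB(t)}] = \partial\phi/\partial u$ and $\E[B(t)e^{uW(t)+vB(t)}] = \partial\phi/\partial v$, I would subtract $\phi(t,u,v)$, divide by $h$, and pass to the limit $h\downarrow 0$ to obtain
\begin{equation*}
\frac{\partial \phi}{\partial t} = \bigl(e^{au+bv}-1\bigr)\frac{\partial \phi}{\partial u} + \bigl(e^{cu+dv}-1\bigr)\frac{\partial \phi}{\partial v},
\end{equation*}
which rearranges to~(\ref{Eq:generalPDE}).

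The main obstacle is justifying the interchange of the limit $h\downarrow 0$ with the expectation, since the chain is unbalanced and $W(t),B(t)$ can grow stochastically without bound, so the $o(h)$ remainders depend on the state. To bypass this, I would restrict $(u,v)$ to a neighborhood of the origin on which $\phi$ is finite and analytic, and then extend by analyticity; equivalently, one can recast the argument through the infinitesimal generator by choosing $f(w,\beta)=e^{uw+v\beta}$ as a test function, noting that the generator of the jump process acts as
\begin{equation*}
(\mathcal{L}f)(w,\beta) = w\bigl(e^{au+bv}-1\bigr)f(w,\beta) + \beta\bigl(e^{cu+dv}-1\bigr)f(w,\beta),
\end{equation*}
and invoking Dynkin's formula to obtain $\partial_t\phi = \E[(\mathcal{L}f)(W(t),B(t))]$, which collapses to the claimed PDE upon re-expressing the right-hand side through $\partial_u\phi$ and $\partial_v\phi$. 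Tenability enters only to ensure that the jump rates $w$ and $\beta$ remain nonnegative along every path, so the generator representation is valid throughout.
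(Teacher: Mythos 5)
The paper does not prove this lemma itself --- it imports it from \cite{Balaji} --- and your infinitesimal analysis is precisely the standard derivation given there and in \cite{balajiM}: condition on the state, use the memoryless clocks to get transition probabilities $wh+o(h)$ and $\beta h+o(h)$, identify $\E[W(t)e^{uW(t)+vB(t)}]$ and $\E[B(t)e^{uW(t)+vB(t)}]$ with $\partial_u\phi$ and $\partial_v\phi$, and let $h\downarrow 0$. Your derivation is correct, and your added remarks on restricting $(u,v)$ to a domain where $\phi$ is finite and on tenability guaranteeing nonnegative jump rates address the only genuinely delicate points (the state-dependent $o(h)$ remainders), so nothing is missing.
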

The general solution to PDE~(\ref{Eq:generalPDE}) is known, but not in a closed form for most cases. In essence, the solution is an integration along characteristic curves. These integrals are often too difficult to obtain.

There are only a few cases for which the solutions are developed: the forward and backward diagonal processes in~\cite{balajiM}, the Ehrenfest processes in~\cite{Balaji}, a class of zero-balanced processes with replacement matrix of {\rm Bernoulli} entries in~\cite{Sparks}, the Apollonian processes in~\cite{Zhang}, the triangular urn processes in~\cite{Chen} and the Bagchi-Pal processes in~\cite{Chen2018}. 

In particular, the PDE for constant-differentials \polya\ processes is
\begin{equation}
\frac{\partial \phi(t, u, v)}{\partial t} + \left(1 - e^{-au - av}\right) \frac{\partial \phi(t, u, v)}{\partial u} + \left(1 - e^{au + av}\right) \frac{\partial \phi(t, u, v)}{\partial v} = 0.
\label{Eq:cdPDE}
\end{equation}
We are able to simplify PDE~(\ref{Eq:cdPDE}) by observing that the (marginal) process of $B(t)$ is fully specified by the (marginal) process of $W(t)$ since the difference $\Delta = \Delta(t) = B(t) - W(t)$ remains constant for all $t$. We thus only need to focus on the evolutionary behavior of $W(t)$, i.e., the moment generating function 
of $W(t)$ only. Consider $\psi(t, u) := \E\left[e^{u W(t)}\right] = \phi(t, u, v = 0)$. 
\begin{lem}	\begin{equation}
	\label{Eq:reducedPDE}
	\frac{\partial \psi(t, u)}{\partial t} + \bigl(2 - e^{-au} - e^{au}\bigr) \frac{\partial \psi(t, u)}{\partial u} + \Delta \, \bigl(1 - e^{au}\bigr)  \psi(t, u) = 0.
	\end{equation}
\end{lem}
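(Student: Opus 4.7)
The plan is to exploit the deterministic identity $B(t) = W(t) + \Delta$, which holds for all $t \ge 0$ by the very definition of the differential index (the replacement matrix in~(\ref{Eq:constant}) preserves $B-W$ at every epoch). This lets me express $\phi$ in terms of $\psi$ in closed form, and then reduce the bivariate PDE~(\ref{Eq:cdPDE}) to a univariate one by specializing at $v=0$, taking care to handle $\partial\phi/\partial v$ correctly.

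First I would write
\begin{equation*}
\phi(t,u,v) = \E\bigl[e^{uW(t) + vB(t)}\bigr] = \E\bigl[e^{uW(t) + v(W(t)+\Delta)}\bigr] = e^{v\Delta}\,\psi(t,u+v).
\end{equation*}
Differentiating this identity gives $\partial_t\phi = e^{v\Delta}\,\partial_t\psi(t,u+v)$, $\partial_u\phi = e^{v\Delta}\,\partial_u\psi(t,u+v)$, and crucially
\begin{equation*}
\frac{\partial \phi}{\partial v}(t,u,v) = \Delta\, e^{v\Delta}\,\psi(t,u+v) + e^{v\Delta}\,\frac{\partial \psi}{\partial u}(t,u+v).
\end{equation*}
Evaluating at $v=0$ yields $\partial_t\phi|_{v=0} = \partial_t\psi(t,u)$, $\partial_u\phi|_{v=0} = \partial_u\psi(t,u)$, and $\partial_v\phi|_{v=0} = \Delta\,\psi(t,u) + \partial_u\psi(t,u)$.

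Next I would substitute these three identities into the constant-differentials PDE~(\ref{Eq:cdPDE}) at $v=0$. The coefficient $1-e^{-au-av}$ becomes $1-e^{-au}$ and multiplies $\partial_u\psi$; the coefficient $1-e^{au+av}$ becomes $1-e^{au}$ and multiplies $\Delta\psi + \partial_u\psi$. Collecting the two $\partial_u\psi$ terms produces the factor $(1-e^{-au}) + (1-e^{au}) = 2 - e^{-au} - e^{au}$, while the remaining $(1-e^{au})\,\Delta\,\psi$ contributes the zeroth-order term. This is precisely~(\ref{Eq:reducedPDE}).

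The only subtle step is recognizing that one cannot simply set $v=0$ inside~(\ref{Eq:cdPDE}) and hope $\partial\phi/\partial v$ disappears; the deterministic linear relation $B(t)-W(t)=\Delta$ is exactly what allows $\partial_v\phi|_{v=0}$ to be re-expressed in terms of $\psi$ and $\partial_u\psi$ alone, thereby closing the equation. The remainder is a routine rearrangement, so I expect no technical obstacle beyond bookkeeping.
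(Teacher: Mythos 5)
Your proposal is correct and follows essentially the same route as the paper: the key step in both is to use $B(t)=W(t)+\Delta$ to rewrite $\partial_v\phi|_{v=0}$ as $\Delta\,\psi+\partial_u\psi$ and then substitute into PDE~(\ref{Eq:cdPDE}) at $v=0$. Your packaging via the identity $\phi(t,u,v)=e^{v\Delta}\psi(t,u+v)$ is a slightly more explicit way of obtaining the same three evaluations, but the argument is otherwise identical.
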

\begin{proof}
	Noticing
	\begin{align*}
	\frac{\partial \phi(t, u, v)}{\partial v} {\Given}_{v = 0} &= \E\left[B(t) e^{uW(t)}\right] 
	\\ &= \E\Bigl[\bigl(W(t) + \Delta\bigr) e^{uW(t)}\Bigr] 
	\\ &= \frac{\partial \psi(t, u)}{\partial u} + \Delta \, \psi(t, u),
	\end{align*}
	we are able to rewrite PDE~(\ref{Eq:cdPDE}) in terms of $\psi(t, u)$; that is,
	\begin{equation*}
	\frac{\partial \psi(t, u)}{\partial t} + \left(1 - e^{-au}\right) \frac{\partial \psi(t, u)}{\partial u} + \left(1 - e^{au}\right) \Bigl(\frac{\partial \psi(t, u)}{\partial u} + \Delta \, \psi(t, u)\Bigr)  = 0,
	\end{equation*}
	which is equivalent to the stated PDE.
\end{proof}

PDE~(\ref{Eq:reducedPDE}) is an initial-value problem, to be solved under the boundary condition $\psi(0, u) = e^{u W(0)}$. The problem is amenable to the {\em method of characteristics} in~\cite{Levine}. 
In fact,
the PDE is of a known type called the {\em transport equation}. The presence of the term $\Delta \, \left(1 - e^{au}\right)  \psi(t, u)$ renders it inhomogeneous. 
\begin{theorem}
	For time $t \in \mathbb{R}^{+}$,
	let $W(t)$ and $B(t)$ be respectively the number of white and blue balls in a constant-differentials \polya\ process on white and blue balls with the replacement matrix 
	$$\begin{pmatrix}
	-a & -a
	\\ a & a
	\end{pmatrix},$$
	and of differential index  $\Delta =B(0) - W(0) \ge 1$. As $t\to\infty$, we have
	$$\frac{W(t)}{t} \convD {\rm Gamma}\Bigl(\frac \Delta a, a^2\Big).$$
\end{theorem}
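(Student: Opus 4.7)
The plan is to solve PDE~(\ref{Eq:reducedPDE}) by the method of characteristics subject to the boundary condition $\psi(0, u) = e^{u W(0)}$, arrive at a closed-form expression for $\psi(t, u)$, and then pass to the limit in the rescaled moment generating function $\psi(t, u/t)$ as $t \to \infty$. L\'evy's continuity theorem (in its moment generating function form) will then identify the limit law of $W(t)/t$, provided the pointwise limit is a valid MGF on some neighbourhood of $u = 0$.

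The entry point for the characteristics is the hyperbolic identity $2 - e^{-au} - e^{au} = -4 \sinh^2(au/2)$. The spatial characteristic equation $du/dt = -4\sinh^2(au/2)$ integrates via $\int \operatorname{csch}^2(x)\, dx = -\coth(x)$, yielding the invariant
$$\xi := \coth(au/2) - 2at.$$
Along a characteristic, the amplitude equation reduces, after the half-angle factorization $1 - e^{au} = -2 e^{au/2}\sinh(au/2)$, to $d(\log\psi)/du = -\Delta/(1 - e^{-au})$, which integrates elementarily (via the substitution $y = e^{au}$) to $-(\Delta/a)\log(e^{au} - 1)$. Hence the general solution has the form $\psi(t, u) = (e^{au} - 1)^{-\Delta/a}\, F(\xi)$ for an arbitrary smooth function $F$ determined by the initial data.

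To pin down $F$, I would set $t = 0$, so that $\xi = \coth(au/2)$ and $e^{au} = (\xi + 1)/(\xi - 1)$, and then solve $\psi(0, u) = e^{u W(0)}$ for $F$; substituting back produces $\psi(t, u)$ in closed form. Feeding in $u/t$ in place of $u$ and expanding $\coth(x) = 1/x + x/3 + O(x^3)$ at $x = au/(2t)$ gives $\xi \sim 2t(1 - a^2 u)/(au) \to \infty$, so $(\xi + 1)/(\xi - 1) \to 1$ and $(e^{au/t} - 1)(\xi - 1) \to 2(1 - a^2 u)$ for any fixed $u < 1/a^2$. The upshot is $\psi(t, u/t) \to (1 - a^2 u)^{-\Delta/a}$, which is precisely the MGF of $\text{Gamma}(\Delta/a, a^2)$, and the initial composition $W(0)$ is washed out in the limit, as advertised in the abstract. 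The main obstacle will be executing the two characteristic integrations cleanly, especially the algebraic cancellation $(1 - e^{au})/\sinh^2(au/2) = -4/(1 - e^{-au})$ that makes the amplitude equation elementary; once this is in hand, the remaining large-$t$ analysis is routine dominant balance, and no interchange-of-limits subtlety arises because all factors are analytic in $u$ on a neighbourhood of the origin.
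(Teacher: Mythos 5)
Your proposal is correct and follows essentially the same route as the paper: the method of characteristics applied to PDE~(\ref{Eq:reducedPDE}) with the initial data $\psi(0,u)=e^{uW(0)}$, yielding a closed-form $\psi(t,u)$, followed by the substitution $u\mapsto u/t$, the pointwise limit $(1-a^2u)^{-\Delta/a}$ on a neighbourhood of the origin, and the continuity theorem. Your first integral $\xi=\coth(au/2)-2at$ and the separated form $(e^{au}-1)^{-\Delta/a}F(\xi)$ are an equivalent (and tidy) repackaging of the paper's explicit characteristic curves traced back to $s=0$; the resulting expression agrees with the paper's $\psi(t,u)=(1+at-ate^{au})^{-\Delta/a}\bigl(\tfrac{ate^{au}-at-e^{au}}{ate^{au}-at-1}\bigr)^{W(0)/a}$, and your asymptotics, including the washout of $W(0)$ and the identification of the ${\rm Gamma}(\Delta/a,a^2)$ moment generating function, are accurate.
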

\begin{proof}
	The characteristic curves for PDE~(\ref{Eq:reducedPDE})
	are the solutions to the {\em ordinary differential equations} (ODEs):
	$$C'(s) = 2 - e^{-as} - e^{as}, \qquad C(t) = u.$$
	The characteristic curves are
	$$C(s) = \frac 1 a \ln \left(\frac {a(s -t) e^{au} - a(s-t) + e^{au} }
	{a(s -t) e^{au} - a(s-t) + 1} \right).$$ 
	We need the intersection point with the $t$-axis to construct the solution; it is	
	$$C(0) = \frac 1 a \ln \left(\frac {at e^{au} - at - e^{au} } {at e^{au} - at - 1} \right).$$
	Unlike the case of homogenious transport equation, where the PDE solution is constant over the characteristic curves, in an  inhomogenious case like the one at hand, the PDE solution varies on the characteristic curves.
	In our case, the PDE solution must satisfy the ODE
	$$v'(s) + \Delta(1 -e^{aC(s)}) v(s) = 0.$$
	An initial condition on the PDE is $\psi(0, t) 
	= \E[e^{W(0) u}] = e^{W(0) u}$. 
	It is required then to solve the ODE for $v(s)$ under the initial condition $v(0) = e^{W(0) C(0)}$. The ODE in question is first-order and linear, with a standard solution. As a solution to the PDE, we obtain
	$$\psi(t, u) = \frac{1}{(1 + at - at e^{au})^{\Delta / a}} \left(\frac{at e^{au} - at - e^{au}}{at e^{au} - at - 1}\right)^{W(0) / a}.$$
	
	Next, we derive the asymptotic distribution of $W(t)$ by considering scale $t$, i.e., $W(t)/t$, as $t \to \infty$. Recall the moment generating function $\psi(t, u)$. We replace the dummy variable $u$ by $x/t$, where $x$ is the new dummy variable, and obtain
	$$\E[e^{x W(t)/t}] = \frac{1}{(1 + at - at e^{ax/t})^{\Delta / a}} \left(\frac{at e^{ax/t} - at - e^{ax/t}}{at e^{ax/t} - at - 1}\right)^{W(0) / a}.$$
	Consider the local expansion of exponential function,
	$$e^{ax/t} = 1 + \frac{ax}{t} + O\left(\frac{1}{t^2}\right).$$
	We come up with
	\begin{align*}
	\E[e^{x W(t)/t}] &= \frac{1}{(1 - a^2x + O(1/t))^{\Delta / a}} \left(\frac{a^2x - 1 + O(1/t)}{a^2x - 1 + O(1/t)}\right)^{W(0) / a}
	\\ &\to (1 - a^2x)^{-\Delta/a}.
	\end{align*}
	In the last convergence relation, the right-hand side is
	the moment generating function of a gamma random variable  
	with shape parameter $\Delta/a$ and scale parameter $a^2$. We thus conclude the stated convergence in distribution
	according to {\em L\'{e}vy's continuity theorem}; see~\cite[p.\ 172]{Karr}.
\end{proof}
\begin{cor}
	\label{Cor:meanvar}
	The mean and variance
	of $W(t)$ are given by 
	\begin{align*}
	\E\left[W(t)\right] &= W(0) + a\Delta t  ,\\
	\V\left[W(t)\right] &= a^2 t(W(0) +  \Delta + a\Delta t).
	\end{align*}
\end{cor}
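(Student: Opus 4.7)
The plan is to extract the first two moments of $W(t)$ from the explicit moment generating function $\psi(t,u) = \E[e^{uW(t)}]$ derived in the preceding theorem. Rather than differentiating the closed-form expression directly (where the $(1+at-ate^{au})^{-\Delta/a}$ factor is awkward), it is cleaner to differentiate the PDE~(\ref{Eq:reducedPDE}) in $u$ and evaluate at $u=0$, since both coefficients $(2 - e^{-au} - e^{au})$ and $(1 - e^{au})$ vanish there and collapse many terms. Write $m_k(t) = \E[W(t)^k] = \partial_u^k \psi(t, u)|_{u=0}$, with $m_0(t)\equiv 1$ and $m_k(0) = W(0)^k$ as initial data.

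For the mean, I would differentiate~(\ref{Eq:reducedPDE}) once in $u$. At $u=0$ the coefficient $(2 - e^{-au} - e^{au})$ of $\psi_{uu}$ still vanishes, the two $\psi_u$ contributions $(ae^{-au}-ae^{au})\psi_u$ and $-a\Delta e^{au}\psi_u + \Delta(1-e^{au})\psi_u$ also vanish at $u=0$, and the only surviving non-derivative piece is $-a\Delta\cdot \psi(t,0) = -a\Delta$. This gives the trivial ODE $m_1'(t) = a\Delta$, which integrates with $m_1(0)=W(0)$ to $\E[W(t)] = W(0) + a\Delta t$.

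For the variance, differentiate~(\ref{Eq:reducedPDE}) twice in $u$ and set $u=0$. The coefficient of $\psi_{uuu}$ vanishes as before, but the product-rule expansion now picks up terms through the second derivative of $(2-e^{-au}-e^{au})$, which contributes $-2a^2 m_1(t)$, together with derivatives of $-a\Delta e^{au}\psi$ and $\Delta(1-e^{au})\psi_u$. The upshot is a linear ODE of the form $m_2'(t) = A\,m_1(t) + B$ for explicit constants $A$ and $B$ depending only on $a$ and $\Delta$. Substituting $m_1(t) = W(0)+a\Delta t$ and integrating produces a quadratic in $t$, after which $\V[W(t)] = m_2(t) - m_1(t)^2$ simplifies to the claimed expression.

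The main obstacle is purely bookkeeping in the second-differentiation step, where several product-rule terms must be evaluated at $u=0$ and combined without sign errors. A useful built-in consistency check is the preceding theorem: since the scaled limit is $\mathrm{Gamma}(\Delta/a, a^2)$ with mean $a\Delta$ and variance $a^3\Delta$, the proposed formulas must satisfy $\E[W(t)]/t \to a\Delta$ and $\V[W(t)]/t^2 \to a^3\Delta$, which pin down the leading-order coefficients. A purely algebraic alternative, avoiding the PDE, would be to Taylor expand $\log \psi(t,u)$ to order $u^2$ using the factorized form derived in the theorem and read off $\kappa_1 = \E[W(t)]$ and $\kappa_2 = \V[W(t)]$ directly; this works because $\log\psi$ splits as a sum of three logarithms, each easy to expand, but it is slightly more computation than the PDE route.
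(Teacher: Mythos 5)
Your route is genuinely different from the paper's. The paper differentiates the closed-form $\psi(t,u)$ directly at $u=0$ (once for the mean, twice for the second moment) and reports
$\E[W^2(t)] = W^2(0) + (2a^2t+2at\Delta)W(0) + a^2t\Delta(at+t\Delta+1)$, whereas you derive moment ODEs by differentiating PDE~(\ref{Eq:reducedPDE}) in $u$ and evaluating at $u=0$. Your method is sound and arguably cleaner: since $2-e^{-au}-e^{au}$ and $1-e^{au}$ vanish to first and zeroth order respectively at $u=0$, the first differentiation gives $m_1'(t)=a\Delta$ (your listed term ``$-a\Delta e^{au}\psi_u$'' should read $-a\Delta e^{au}\psi$, but you use the correct value $-a\Delta\psi(t,0)$ in the end), and the second gives
$m_2'(t) = 2a(a+\Delta)\,m_1(t) + a^2\Delta$, which integrates with $m_2(0)=W(0)^2$ to exactly the paper's displayed second moment. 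So the two approaches agree on $\E[W(t)]$ and $\E[W^2(t)]$; yours buys independence from the somewhat delicate closed form of $\psi$.

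The one genuine problem is your final sentence: $m_2(t)-m_1(t)^2$ does \emph{not} simplify to the claimed expression. Carrying out the subtraction from your own ODEs (or from the paper's displayed $\E[W^2(t)]$) gives
\begin{equation*}
\V[W(t)] = 2a^2tW(0) + a^2\Delta t + a^3\Delta t^2 = a^2t\bigl(2W(0)+\Delta+a\Delta t\bigr),
\end{equation*}
with coefficient $2W(0)$ rather than $W(0)$. The corollary's stated variance is therefore inconsistent with its own proof; a direct small-$t$ check confirms the $2W(0)$ version (take $a=1$, $W(0)=1$, $\Delta=1$: one epoch occurs by time $t$ with probability $3t+O(t^2)$ and sends $W$ to $0$ or $2$ with probabilities $1/3$ and $2/3$, giving $\V[W(t)]=3t+O(t^2)$, matching $a^2t(2W(0)+\Delta+a\Delta t)=3t+t^2$ but not the stated $2t+t^2$). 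Note also that your proposed consistency check against the Gamma limit only constrains the $t^2$ coefficient $a^3\Delta$ and so cannot detect this discrepancy in the $W(0)$ term. Had you actually performed the bookkeeping you deferred, your method would have exposed the error in the statement rather than confirmed it; as written, the assertion that the algebra lands on the claimed formula is the gap.
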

\begin{proof}
	The mean, at time $t$, is readily obtained by taking the derivative of $\psi(t,u)$ at $u = 0$. Likewise, the second moment of $W(t)$ is obtained by taking the derivative of $\psi(t,u)$ twice at $u = 0$, 
	i.e.,
	$$\E\left[W^2(t)\right] = W^2(0) + (2 a^2 t + 2at\Delta)W(0) + a^2 t \Delta(at + t \Delta + 1).$$
	The variance then follows after simplifying $\E\left[W^2(t)\right]- \E^2\left[W(t)\right]$.
\end{proof}

\begin{cor}
	\begin{align*}
	\frac{B(t)}{t} &\convD  {\rm Gamma}\left(\frac \Delta a, a^2\right),\\
	\frac{\tau(t)}{t} &\convD 2 \, {\rm Gamma}\left(\frac \Delta a, a^2\right).
	\end{align*}
\end{cor}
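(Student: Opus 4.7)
The key observation is that the constant-differentials property of the process makes this corollary almost immediate from the previous theorem. Since the replacement matrix always adds or subtracts equal numbers of white and blue balls, the quantity $B(t) - W(t)$ is invariant over time and equals the initial differential index $\Delta$. Thus $B(t) = W(t) + \Delta$ for every $t \ge 0$ as a deterministic identity (not just in distribution), and similarly the total count satisfies $\tau(t) := W(t) + B(t) = 2 W(t) + \Delta$.

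With these exact identities in hand, the plan is to divide by $t$ and read off the limits from the theorem. Explicitly,
\begin{equation*}
\frac{B(t)}{t} = \frac{W(t)}{t} + \frac{\Delta}{t}, \qquad \frac{\tau(t)}{t} = 2 \cdot \frac{W(t)}{t} + \frac{\Delta}{t}.
\end{equation*}
Since $\Delta/t \to 0$ as $t \to \infty$ (a deterministic constant divided by $t$), Slutsky's theorem combined with the theorem's conclusion $W(t)/t \convD {\rm Gamma}(\Delta/a, a^2)$ yields $B(t)/t \convD {\rm Gamma}(\Delta/a, a^2)$. For the total, the continuous mapping theorem applied to the map $x \mapsto 2x$ converts the limit of $W(t)/t$ into $2 \cdot {\rm Gamma}(\Delta/a, a^2)$, and again Slutsky absorbs the vanishing $\Delta/t$ term.

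Alternatively, one could verify the result at the level of moment generating functions, using $\E[e^{x B(t)/t}] = e^{x\Delta/t} \, \psi(t, x/t)$ and $\E[e^{x \tau(t)/t}] = e^{x\Delta/t} \, \psi(t, 2x/t)$, then repeating the local expansion carried out in the proof of the theorem; the factor $e^{x\Delta/t}$ tends to $1$, and the limits become $(1 - a^2 x)^{-\Delta/a}$ and $(1 - 2 a^2 x)^{-\Delta/a}$ respectively, which are the moment generating functions of ${\rm Gamma}(\Delta/a, a^2)$ and $2 \cdot {\rm Gamma}(\Delta/a, a^2)$. L\'evy's continuity theorem then delivers the stated convergences.

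There is no genuine obstacle here: the only subtlety worth flagging is that $2 \cdot {\rm Gamma}(\Delta/a, a^2)$ should be interpreted as a gamma distribution with shape $\Delta/a$ and scale $2 a^2$ (scaling a gamma variable by a positive constant multiplies the scale parameter), so the statement can equivalently be written as $\tau(t)/t \convD {\rm Gamma}(\Delta/a, 2a^2)$. Once this convention is fixed, the proof consists entirely of the deterministic linear identities between $W(t)$, $B(t)$, $\tau(t)$ and a routine application of Slutsky's theorem.
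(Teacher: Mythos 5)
Your proof is correct and follows essentially the same route as the paper: use the deterministic identities $B(t) = W(t) + \Delta$ and $\tau(t) = 2W(t) + \Delta$ forced by the constant-differential condition, divide by $t$, and apply Slutsky's theorem (plus the continuous mapping step for the factor of $2$) to the theorem's limit for $W(t)/t$. Incidentally, your sign is the consistent one given $\Delta = B(t) - W(t)$, whereas the paper writes $\tau(t) = 2W(t) - \Delta$; either way the $\Delta/t$ term vanishes and the conclusion is unaffected.
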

\begin{proof}
	In view of the constant differential condition, we have
	$$\tau(t) = W(t) + B(t) = W(t) + \bigl(W(t) - \Delta\bigr) = 2W(t) - \Delta.$$
	With $W(t) /t$ convergent in distribution and $\Delta /t \to 0$, by {\em Slutsky's theorem} (see~\cite[p.\ 146]{Karr}) we can assert the corollary.
\end{proof}

\section{An alternative approach via martingale}

We use the superscript $\top$ on a matrix to denote its transpose, and consider $\matB = \matA^{\top}$.
It is folklore that $e^{-t \matB}\vecX(t)$, where $\vecX(t)$ is a random vector containing the number of different kinds of balls in a \polya\ process at time~$t$, is a continuous-time martingale, which has been observed in many classes of \polya\ processes. This is shown in~\cite{Janson} for a broad class of \polya\ processes that have a particular eigenvalue structure, with a positive real principal eigenvalue. However, this condition is not satisfied in the case of constant-differentials \polya\ processes, where the principal eigenvalue is a repeated $0$. We prove next that indeed,  $e^{-t \matB}\vecX(t)$ is a martingale, too, for the class of constant-differentials \polya\ processes.
\begin{lem}
	The vector $e^{-t \matB}\vecX(t)$ is a bivariate martingale with respect to the natural filtration generated $\field_t$ by the evolution of a constant-differentials \polya\ process associated with Matrix~(\ref{Eq:constant}).
\end{lem}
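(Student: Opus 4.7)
My plan is to exploit the nilpotency of $\matB = \matA^{\top}$ to reduce the matrix exponential to a linear polynomial, and then verify the martingale property componentwise via a direct infinitesimal-drift calculation based on the Poisson embedding.

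First I would compute $\matB$ explicitly and observe the key algebraic fact that $\matB^2 = \mathbf{0}$: every entry of the product features the cancellation $-a^2 + a^2 = 0$. Consequently $e^{-t\matB} = \mathbf{I} - t\matB$, which sidesteps any convergence issue in the matrix-exponential series. Using the constant-differential identity $B(t) - W(t) = \Delta$, a direct multiplication yields
$$e^{-t\matB}\vecX(t) \;=\; \vecX(t) - t\matB \vecX(t) \;=\; \begin{pmatrix} W(t) - at\Delta \\ B(t) - at\Delta \end{pmatrix}.$$
So the lemma reduces to showing that each of the scalar processes $M_1(t) := W(t) - at\Delta$ and $M_2(t) := B(t) - at\Delta$ is an $\field_t$-martingale.

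Next I would invoke the Poisson embedding to compute the conditional drift. Over an infinitesimal interval $(t, t+h]$, the probability of an epoch is $\tau(t)h + o(h)$ with $\tau(t) = W(t) + B(t)$; conditional on an epoch, a white ball is drawn with probability $W(t)/\tau(t)$, triggering $W\mapsto W-a$ and $B\mapsto B-a$, whereas a blue ball is drawn with probability $B(t)/\tau(t)$, triggering $W\mapsto W+a$ and $B\mapsto B+a$. Weighting these outcomes gives
$$\E\bigl[W(t+h) - W(t) \given \field_t\bigr] \;=\; a\bigl(B(t) - W(t)\bigr) h + o(h) \;=\; a\Delta \, h + o(h),$$
and the identical identity with $W$ replaced by $B$ (note the symmetry of $\matA$ in its rows). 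Thus each $M_i$ has vanishing $\field_t$-conditional drift, integrability being supplied by Corollary~\ref{Cor:meanvar}.

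The main (and fairly minor) obstacle is bridging this infinitesimal identity to the full martingale relation $\E[M_i(t)\given\field_s] = M_i(s)$ for $0 \le s \le t$. I would handle this by recognizing $M_i$ as the compensated pure-jump process associated with the $\field_t$-predictable rate $a\Delta$, so that its martingality follows from the standard Doob--Meyer decomposition for counting processes. Once componentwise martingality is established, the bivariate martingale property of $e^{-t\matB}\vecX(t)$ is immediate.
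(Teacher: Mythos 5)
Your proposal is correct, and the first half (nilpotency, $e^{-t\matB}=\mathbf{I}-t\matB$, and the reduction to showing that $W(t)-a\Delta t$ and $B(t)-a\Delta t$ are scalar martingales) coincides exactly with the paper's proof. Where you diverge is in how the drift is established. The paper does not compute an infinitesimal generator at all: it invokes the Markov property to view the process on $[s,t]$ as a fresh constant-differentials process started from $W(s)$ (with the same differential index, since $B(s)-W(s)=\Delta$ is preserved), and then reads off $\E[W(t)\given\field_s]=W(s)+a\Delta(t-s)$ directly from the mean formula of Corollary~\ref{Cor:meanvar}, which was already derived from the explicit PDE solution. You instead derive the drift $a\Delta$ from first principles via the Poisson embedding and then compensate, writing $W(t)=W(0)-aN_w(t)+aN_b(t)$ for the two underlying jump-counting processes with intensities $W(t^-)$ and $B(t^-)$. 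Your route is more self-contained at the level of the dynamics and makes transparent that the constant differential is precisely what renders the compensator $\int_0^t a\bigl(B(s)-W(s)\bigr)\,ds=a\Delta t$ deterministic; the price is that you must carry the local-martingale-to-martingale upgrade (integrability plus no explosion, the latter following since the jump rate $\tau(t)$ grows only linearly per epoch), which the paper's appeal to Corollary~\ref{Cor:meanvar} quietly absorbs. Two small nits: $W(t)$ is not itself a counting process (it jumps by $\pm a$), so ``Doob--Meyer for counting processes'' should be applied to $N_w$ and $N_b$ separately as sketched above, not to $M_i$ directly; and the paper's shortcut is only as rigorous as its implicit use of the strong Markov property, so your extra care is not wasted.
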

\begin{proof}
	Let $\vecX(t)$ be the vector $\bigl(W(t), B(t)\bigr)^{\top}$.
	Note that $\matB$ is a {\em nilpotent matrix}, 
	as $\matB^2 = \bf 0$. 
	In view of this nilpotency, the matrix 
	$e^{-t\matB}$ is easily found (from the series expansion) to be ${\bf I} - t\matB = \left[\begin{smallmatrix} 1+at &-at \\ at &1-at\end{smallmatrix}\right]$. We thus have
	
	\begin{align*}
	\E\left[e^{-t \matB}\vecX(t) \, \big | \, \field_s\right] 
	&= \E\left[\begin{pmatrix} W(t) + atW(t) - atB(t) \\   
	atW(t) + B(t) - atB(t)
	\end{pmatrix} \Given \field_s\right]  \\
	& = \E\left[\begin{pmatrix} W(t) -a\Delta t \\   B(t) -a\Delta t
	\end{pmatrix} \Given \field_s\right].  
	\end{align*}		             
	Let us interpret the top row, which reads
	$$\E\left[W(t) -a\Delta t  \, \big | \, \field_s\right] = \E\left[W(t)  \, \big | \, \field_s\right] - a\Delta t.$$
	We consider $\E\left[W(t) -a\Delta t  \, \big | \, \field_s\right]$ as the status of 
	white balls, in an urn process 
	starts at time $s$ with an initial number $W(s)$ of white balls and 
	evolves over a period of $(t - s)$. By Corollary~\ref{Cor:meanvar}, we have $\E\left[W(t) -a\Delta t  \, \big | \, \field_s\right] = W(s) +  a\Delta (t - s)$. Consequently, we arrive at
	$$\E\left[W(t) -a\Delta t  \, \big | \, \field_s\right] =  W(s) +  a\Delta (t-s) - a\Delta t =  W(s) - a\Delta s,$$
	so $\bigl(W(t) -a\Delta t \bigr)$ is a martingale. Similarly, we have
	$$\E\left[B(t) -a\Delta t  \, \big | \, \field_s\right] =  B(s) - a\Delta s,$$
	At the vectorial level we get
	\begin{align*}
	\E\left[e^{-t \matB}\vecX(t) \Given \field_s\right] 
	&=\begin{pmatrix} W(s) - a\Delta s \\   
	B(s) -   a\Delta s
	\end{pmatrix} \\
	& = \begin{pmatrix} W(s) - a\bigl(B(s) - W(s)\bigr) s \\   
	B(s) -   a\bigl(B(s) - W(s)\bigr) s
	\end{pmatrix}  \\
	& = e^{-s \matB}\vecX(s) ,
	\end{align*}	
	and we conclude that $e^{-t \matB}\vecX(t)$ is a two-dimensional  martingale.
\end{proof}

\begin{remark}
	The mean in Corollary~\ref{Cor:meanvar} can be obtained from the martingale formulation, as we have 
	$$\E\left[e^{-t \matA^\top}\vecX(t) \, \big{|} \, \field_s \right] =e^{-s \matA\top}\vecX(s),$$ 
	for any $s < t$.  We thus have
	$\E\left[e^{-t \matA^\top}\vecX(t)\right] = \vecX(0)$, 
	or in other words, 
	$$\E\left[\vecX(t)\right] =  e^{t \matA^\top}\vecX(0)= \begin{pmatrix} W(0) +  a \Delta t \\
	W(0) + \Delta +a \Delta t  \end {pmatrix}.$$
	However, it is harder to obtain higher moments or the distribution by this technique.
\end{remark}
\begin{lem}
	$\E |W(t)/t -a\Delta |$ is bounded as $t \to \infty$.
\end{lem}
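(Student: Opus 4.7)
The plan is to bound $\E|W(t)/t - a\Delta|$ directly from the first two moments made explicit in Corollary~\ref{Cor:meanvar}. Convergence in distribution from the theorem does not automatically give boundedness of the first absolute moment, but the exact variance formula is already tailored for this — so all the real work has been done upstream.

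First I would split the deviation around $a\Delta$ into a centering term and a bias term by the triangle inequality:
\begin{equation*}
\E\left|\frac{W(t)}{t} - a\Delta\right| \le \E\left|\frac{W(t)}{t} - \frac{\E[W(t)]}{t}\right| + \left|\frac{\E[W(t)]}{t} - a\Delta\right|.
\end{equation*}
By Corollary~\ref{Cor:meanvar}, $\E[W(t)]/t = W(0)/t + a\Delta$, so the bias term equals $W(0)/t$, which is obviously bounded (indeed it vanishes as $t \to \infty$).

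For the centering term I would invoke Jensen's inequality (equivalently Cauchy–Schwarz with the constant $1$) to pass from $L^1$ to $L^2$,
\begin{equation*}
\E\left|\frac{W(t)}{t} - \frac{\E[W(t)]}{t}\right| \le \sqrt{\V\left[\frac{W(t)}{t}\right]} = \frac{\sqrt{\V[W(t)]}}{t},
\end{equation*}
and then substitute the variance formula from Corollary~\ref{Cor:meanvar} to obtain
\begin{equation*}
\frac{\sqrt{a^2 t (W(0) + \Delta + a\Delta t)}}{t} = a\sqrt{\frac{W(0) + \Delta}{t} + a\Delta},
\end{equation*}
which tends to $a^{3/2}\sqrt{\Delta}$ as $t \to \infty$ and is therefore bounded on $(0,\infty)$. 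Combining the two bounds yields the stated uniform bound on $\E|W(t)/t - a\Delta|$.

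There is no genuine obstacle to this argument; the only ingredient needed is that $\V[W(t)]$ grows linearly in $t$, which is exactly the scaling that keeps $\V[W(t)/t]$ bounded after dividing by $t^2$. The interest of the lemma is not in its difficulty but in its role: it provides the $L^1$-boundedness (hence uniform integrability of $W(t)/t$) needed to upgrade the distributional convergence of Theorem~1 into convergence of first moments toward $a\Delta$, which matches the mean $a\Delta$ of the limiting $\mathrm{Gamma}(\Delta/a,a^2)$ law.
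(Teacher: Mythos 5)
Your argument is correct and is essentially the paper's own proof: the same triangle-inequality split around $\E[W(t)]$, the same use of Corollary~\ref{Cor:meanvar} for the mean and variance, and the same Cauchy--Schwarz/Jensen step to bound the centered $L^1$ term by $\sqrt{\V[W(t)]}/t$, yielding the same final bound. One small slip: your bound $a\sqrt{(W(0)+\Delta)/t + a\Delta} + W(0)/t$ is \emph{not} bounded on all of $(0,\infty)$ --- it blows up as $t \to 0^+$, which the paper itself points out in the remark following this lemma --- but this does not affect the stated claim, which concerns only $t \to \infty$.
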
	
\begin{proof}
	By the {\em triangle inequality}, we have
	\begin{align*}
	\E\bigl|W(t) - a \Delta t\bigr|  &\le \E \bigl|W(t) - \E[W(t)] \bigr| + \E \bigl|\E[W(t)]- a \Delta t\bigr|
	\\ &= \E \bigl|W(t) - W(0) - a \Delta t \bigr| + W(0),
	\end{align*}
	where the second line is followed by Corollary~\ref{Cor:meanvar}. By {\em Cauchy-Schwarz inequality}, we thus get
	$$
	\E\bigl|W(t) - a \Delta t\bigr|  \le \sqrt{a^2t \bigl(W(0) + \Delta + a \Delta t \bigr)} + W(0),
	$$
	leading to
	$$\E\left|\frac{W(t)}{t} - a \Delta\right| \le \sqrt{a^3 \Delta + \frac{a^2 \bigl(W(0) + \Delta\bigr)}{t}} + \frac{W(0)}{t},$$
	which proves that $\E |W(t)/t -a\Delta |$ is bounded for large value of $t$.
\end{proof}	

\begin{remark}
	The mean, $\E |W(t)/t -a\Delta |$, is not uniformly bounded for all $t \in \mathbb{R}^{+}$; more precisely, it blows up when $t$ is close to $0$. We thus conjecture that $W(t)/t$ may not converge to gamma almost surely.
\end{remark}



\end{document}